\newcommand{\const}{\operatorname{const.}}
\newcommand{\dvol}{\operatorname{dvol}}
\newcommand{\Hess}{\operatorname{Hess}}
\newcommand{\Ric}{\operatorname{Ric}}
\numberwithin{equation}{section}
\theoremstyle{plain}
\newtheorem{lemma}[equation]{Lemma}
\newtheorem{theorem}[equation]{Theorem}
\theoremstyle{remark}
\newtheorem{remark}[equation]{Remark}
\begin{document}

\begin{abstract}
We show that at the level of formal expansions, any compact
Riemannian manifold is the sphere at infinity of an 
asymptotically conical gradient expanding
Ricci soliton.
\end{abstract}

\title[Expanding solitons]
{Note on asymptotically conical expanding Ricci solitons}

\author{John Lott}
\address{Department of Mathematics\\
University of California, Berkeley\\
Berkeley, CA  94720-3840\\
USA} \email{lott@berkeley.edu}

\author{Patrick Wilson}
\address{Department of Mathematics\\
University of California, Berkeley\\
Berkeley, CA  94720-3840\\
USA} \email{patrickfw@berkeley.edu}

\thanks{Research supported by NSF grants DMS-1344991, DMS-1440140
and DMS-1510192. We thank MSRI for its hospitality during the 
Spring 2016 program.}
\date{December 6, 2016}

\maketitle

\section{Introduction}

When looking at Ricci flow on noncompact manifolds, the asymptotically
conical geometries are especially interesting.   An asymptotically
conical Riemannian manifold $(M,g_0)$ is modelled at infinity by 
its asymptotic cone $C(Y)$.  We take the link $Y$ to be a compact
manifold with Riemannian metric $h$. If $\star$ is the vertex of
$C(Y)$ then the Riemannian metric on $C(Y) - \star$ is $dr^2 + r^2 h$, with
$r \in (0, \infty)$. 

Suppose that there is a Ricci flow solution
$(M, g(t))$ on $M$ that exists for
all $t \ge 0$, with $g(0) = g_0$. One can analyze the large time and
large distance
behavior of the flow by parabolic blowdowns. With a suitable choice of
basepoints, there is a subsequential blowdown
limit flow $g_\infty(\cdot)$ that is defined at least
on the subset of $C(Y) \times [0, \infty)$
given by $\{(r, \theta, t) \in (0, \infty) \times Y \times [0, \infty)
\: : \: t \le \epsilon r^2 \}$, for some
$\epsilon > 0$ \cite[Proposition 5.6]{Lott-Zhang (2013)}.
For each $t > 0$, the metric $g_\infty(t)$ is asymptotically conical,
with asymptotic cone $C(Y)$.

Since $g_\infty(\cdot)$ is a blowdown limit, 
the simplest scenario is that it is self-similar
in the sense that it is an
expanding Ricci soliton flow coming out of the cone $C(Y)$. This raises the
question of whether such an expanding soliton exists for arbitrary
choice of $(Y, h)$. Note that the relevant expanding solitons
need not be smooth and complete.  
For example, if $(M, g_0)$ is an asymptotically
conical Ricci flat manifold, then the blowdown flow $g_\infty$ is the static
Ricci flat metric on $C(Y) - \star$; this is an expanding soliton,
although $C(Y)$ may not be a manifold.

The equation for a gradient expanding Ricci soliton $(M, g)$, with
potential $f$, is
\begin{equation} \label{1.1}
\Ric + \Hess(f) \: = \: - \: \frac12 \: g.
\end{equation}
The main result of this paper says that any $(Y, h)$ is
the sphere at infinity of an asymptotically conical gradient expanding
Ricci soliton, at least at the level of formal expansions.

\begin{theorem} \label{1.2}
Given a compact Riemannian manifold 
$(Y, h)$, there is a formal solution to (\ref{1.1}) on $(0,\infty) \times Y$,
of the form
\begin{align} \label{1.3}
g \: = \: & dr^2 + r^2h + h_{0} + r^{-2}h_{2} + \cdots + r^{-2i}h_{2i} + \cdots \\
f \: = \: & - \frac14 r^2 + f_{0} + r^{-2}f_{2} + \cdots + r^{-2i}f_{2i} + \cdots,  \notag
\end{align}
where $h_{2i}$ is a symmetric $2$-tensor field on $Y$ and $f_{2i} \in C^\infty(Y)$. The solution is unique up to adding a constant to $f_0$.
\end{theorem}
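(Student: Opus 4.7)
The plan is a formal expansion order by order in $r^{-2}$, in the spirit of the Fefferman--Graham expansion for Einstein metrics.  Let $E(g,f) := \Ric_g + \Hess_g f + \tfrac12 g$.  Substituting the ansatz (\ref{1.3}) and using $\Gamma^r_{rr} = \Gamma^r_{ra} = 0$ (which follow from $g_{rr}=1$ and $g_{ra}=0$), the three tensor components $E_{rr}$, $E_{ra}$, $E_{ab}$ become formal power series in $r^{-1}$ whose coefficients are scalars, $1$-forms, and symmetric $2$-tensors on $Y$.  Only even, odd, and even powers of $r^{-1}$ occur, respectively, and vanishing of all coefficients is the recursive system to solve.

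The key algebraic observation is that the leading potential $-\tfrac14 r^2$ is tailored to the right-hand side: using only $\Gamma^r_{ab} = -\tfrac12 \partial_r g_{ab}$, a direct computation gives
\begin{align*}
\Hess_g(-\tfrac14 r^2)_{rr} + \tfrac12 g_{rr} &= 0, \\
\Hess_g(-\tfrac14 r^2)_{ra} &= 0, \\
\Hess_g(-\tfrac14 r^2)_{ab} + \tfrac12 g_{ab} &= \tfrac12 h_{0,ab} + \sum_{i \ge 1} \tfrac{i+1}{2}\, r^{-2i}\, h_{2i, ab}.
\end{align*}
Thus $h_{2i}$ enters the $(ab)$-equation at order $r^{-2i}$ \emph{algebraically}, with nonzero multiplier $\tfrac{i+1}{2}$.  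Similarly, on the cone, $\Hess_g(r^{-2j} f_{2j})$ contributes $-(2j+1) r^{-(2j+1)} X_a f_{2j}$ to $E_{ra}$ and $2j(2j+1) r^{-(2j+2)} f_{2j}$ to $E_{rr}$.  A direct Gauss--Codazzi calculation shows that a perturbation $r^{-2j} h_{2j}$ of the cone contributes to $\Ric(g)_{ab}$ only starting at order $r^{-(2j+2)}$ and to $\Ric(g)_{ra}$ only at order $r^{-(2j+3)}$; consequently, at each order the ``current'' unknowns $(h_{2i}, f_{2i})$ appear with invertible algebraic leading coefficients.

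The induction then proceeds as follows.  At $i = 0$: the $(ra)$-equation at $r^{-1}$ has no Ricci contribution and collapses to $-X_a f_0 = 0$, forcing $f_0$ to be a constant --- this is precisely the freedom claimed by the theorem.  The $(ab)$-equation at $r^0$ then reads $\tfrac12 h_{0,ab} + \Ric^h_{ab} - (\dim Y - 1) h_{ab} = 0$, giving $h_0 = 2(\dim Y - 1) h - 2\Ric^h$ explicitly, and the $(rr)$-equation at $r^0$ is automatically satisfied since $\Ric(g_c)_{rr} = 0$.  For $i \ge 1$: the $(rr)$-equation at $r^{-(2i+2)}$ algebraically determines $f_{2i}$ in terms of $\tr_h h_{2(i-1)}$ and earlier data, and the $(ab)$-equation at $r^{-2i}$ then algebraically determines $h_{2i}$ from $\Hess^h f_{2i}$ and earlier data.

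The main obstacle is verifying the compatibility conditions at each order --- in particular, that the $(ra)$-equation at $r^{-(2i+1)}$ holds automatically as a consequence of the $(ab)$- and $(rr)$-equations.  I expect this to follow from the twice-contracted second Bianchi identity $\nabla^\mu(\Ric_{\mu\nu} - \tfrac12 R g_{\mu\nu}) = 0$ combined with the derived soliton identity $\nabla R = 2\Ric(\nabla f, \cdot)$ (a formal consequence of (\ref{1.1})), whose expansions at each order provide the required compatibility.  The delicate combinatorial bookkeeping is where the real work of the proof lies.
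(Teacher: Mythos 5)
Your recursive construction of the coefficients is sound and matches the paper's: the $(ab)$-equation at order $r^{-2i}$ determines $h_{2i}$ algebraically with the nonzero multiplier $\tfrac{i+1}{2}$ coming from the $\tfrac12 g_{ab}$ and $\Hess f$ terms, the $(rr)$-equation determines the $f$-coefficients via the $f_{,rr}$ term, and the $(ra)$-equation at order $r^{-1}$ forces $f_0$ to be constant, which is the stated uniqueness freedom. But you have correctly identified, and then not actually solved, the heart of the theorem: showing that the $(ra)$-constraint is satisfied to all orders. Your proposed mechanism is circular as stated. The identity $\nabla R = 2\Ric(\nabla f,\cdot)$ is a consequence of the \emph{full} soliton equation, including its $(ra)$-component; it is not available when you only know that the $(ab)$- and $(rr)$-components hold, which is exactly the situation after your recursion. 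The unweighted contracted Bianchi identity alone does not close the argument either, because the divergence of $\Ric+\Hess f+\tfrac12 g$ produces terms involving $\Delta f$ and $|\nabla f|^2$ that are not controlled by the two equations you have imposed.

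What is needed is the weighted contracted Bianchi identity, valid for an \emph{arbitrary} pair $(g,f)$ with no soliton assumption:
\begin{equation*}
\nabla^a\left( R_{ab} + \nabla_a\nabla_b f +\tfrac{1}{2}g_{ab} \right) - (\nabla^a f)\left(R_{ab} + \nabla_a\nabla_b f + \tfrac{1}{2}g_{ab}\right) = \tfrac{1}{2}\nabla_{b}\left( R + 2\Delta f - |\nabla f|^2 - f \right),
\end{equation*}
where the right-hand side involves Perelman's weighted scalar curvature. Once the $(ab)$- and $(rr)$-equations hold to all orders, the $b=r$ and $b=i$ components of this identity become a closed linear system relating the residual $X_{ir}=R_{ir}+\nabla_i\nabla_r f$ of the constraint to the scalar $S=R+2\Delta f-|\nabla f|^2-f$: schematically, a divergence of $X$ equals $\tfrac12\partial_r S$, and $\partial_r X$ plus lower-order terms equals $\tfrac12\partial_i S$. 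The conclusion $X\equiv 0$ (to all orders) then follows from an order-counting contradiction: if $X=r^{-N}\phi+O(r^{-N-1})$ with $\phi\neq 0$ and $S=r^{-M}\psi+O(r^{-M-1})$ with $\psi\neq 0$, the second equation has leading term $\tfrac12 r^{-N+1}\phi_i$ on the left (the $-\tfrac14 r^2$ in $f$ contributes $-(\partial_r f)X_{ir}\sim\tfrac12 r\,X_{ir}$), forcing $M\le N-1$, while the first forces $M\ge N+1$. You must also handle separately the case where $S$ vanishes to all orders but $X$ does not, which falls to the same leading-order computation. None of this is "combinatorial bookkeeping" on top of your proposed identities; it requires the weighted identity and the two-variable ping-pong between $X$ and $S$, so as written your proof has a genuine gap at the decisive step.
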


When writing (\ref{1.1}) in the $(0,\infty) \times Y$ decomposition, one obtains
two evolution equations and a constraint equation.  The main issue
in proving Theorem \ref{1.2} is to show that solutions of the evolution
equations automatically satisfy the constraint equation.

There has been earlier work on asymptotically conical
expanding solitons.
\begin{enumerate}
\item Schulze and Simon considered the Ricci flow on an asymptotically
conical manifold with nonnegative curvature operator
\cite{Schulze-Simon (2013)}.  They showed
that there is a long-time solution and its blowdown limit is an
gradient expanding soliton solution.

\item Deruelle showed that if $(Y, h)$ is simply connected and
$C(Y) - \star$ has nonnegative curvature operator then
there is a smooth gradient expanding Ricci soliton 
$(M,g,f)$ with asymptotic cone $C(Y)$
\cite{Deruelle (2015)}.

\item In the K\"ahler case, the analog of Theorem \ref{1.2} was proven by
the first author and Zhang
\cite{Lott-Zhang (2013)}. The K\"ahler case differs from
the Riemannian case in two ways. First,
in the K\"ahler case the Ricci soliton equation reduces to a scalar equation. Second,
a K\"ahler cone has a natural holomorphic vector field that generates a 
rescaling of the complex coordinates.
In \cite[Propositions 5.40 and 5.50]{Lott-Zhang (2013)}
it was shown that there is a formal expanding soliton based on this 
vector field,
and then that the vector field is the gradient of a soliton potential
$f$.  In the Riemannian case there is no {\it a priori} choice of vector field. Instead,
we work directly with the gradient soliton equation (\ref{1.1}).
\end{enumerate}

In what follows, we use the Einstein summation convention freely.

We thank the referee for helpful comments.

\section{Soliton equations}

Put $\dim(Y) = n$.
Consider a Riemannian metric on $(0, \infty) \times Y$ given in radial form by
$g = dr^2 + H(r)$. Here for each $r \in (0, \infty)$, we have a Riemannian metric
$H(r)$ on $Y$. Letting $\{x^i\}_{i=1}^n$ 
be local coordinates for $Y$, the gradient expanding soliton
equation (\ref{1.1}) becomes the equations
\begin{align} \label{2.1}
 R^g_{jk}  + (\Hess_g f)_{jk}  + \frac{1}{2}g_{jk} & = 0, \\
 R^g_{rr}  + (\Hess_g f)_{rr}  + \frac{1}{2} & = 0, \notag \\ 
 R^g_{rl} + (\Hess_g f)_{rl} & = 0. \notag
\end{align}
After multiplying by $2$, these equations can be written explicitly as
\begin{align} \label{2.2}
& - H_{jk,rr} + 2R^H_{jk} - \frac{1}{2}H^{il}H_{il,r}H_{jk,r} +
H^{il}H_{kl,r}H_{ij,r}
 \\ 
& + 2(\Hess_H f)_{jk} + H_{jk,r}f_{,r} + H_{jk} =  0, \notag
\end{align}
\begin{equation} \label{2.3}
-H^{jk}H_{jk,rr} + \frac{1}{2}H^{ij} H_{jk,r} H^{kl} H_{li,r} + 
2f_{,rr} + 1  = 0
\end{equation}
and
\begin{equation} \label{2.4}
H^{im}\left(\nabla_i H_{ml,r} - \nabla_l H_{im,r}\right) + 2f_{,rl} - 
H^{mn}H_{nl,r}f_{,m} = 0,
\end{equation}
where the covariant derivatives are with respect to the Levi-Civita
connection of $H(r)$.

We now write
\begin{align} \label{2.5}
H \: = \: & r^2h + h_{0} + r^{-2}h_{2} + \cdots + r^{-2i}h_{2i} + \cdots, \\
f \: = \: & - \frac14 r^2 + f_{0} + r^{-2}f_{2} + \cdots + r^{-2i}f_{2i} + \cdots. \notag
\end{align}
We substitute (\ref{2.5}) into (\ref{2.2})-(\ref{2.4}) and equate coefficients. 
Using (\ref{2.4}), one finds that $f_0$ is a constant.
For $i \ge 0$ we can determine
$h_{2i}$ in terms of
$\{h, h_0, \ldots, h_{2i-2}, f_0, \ldots, f_{2i} \}$ from
(\ref{2.2}), since the $H_{jk,r} f_{,r}$-term and the $H_{jk}$-term combine
to give a factor of $(i+1) r^{-2i} \left( h_{2i} \right)_{jk}$.
(When $i = 0$, we determine $h_0$ in terms of $h$ and $f_0$.) 
And we can determine $f_{2i+2}$ in terms of
$\{h, h_0, \ldots, h_{2i} \}$ from (\ref{2.3}), thanks to the $f_{,rr}$-term.
Iterating this procedure, one finds
\begin{align} \label{2.6}
H_{jk} & = r^2h_{jk} -2\left[R_{jk} - (n-1)h_{jk}\right] \\
 & \hspace{.5cm}  + r^{-2}\left[-\Delta_{L}R_{jk} + \frac{1}{3}(\mbox{Hess}_{h}R)_{jk} + \frac{4}{3}R h_{jk} - 4R_{jk} -4(\frac{n}{3} - 1)(n-1)h_{jk}\right] \notag \\
 & \hspace{.5cm} + O(r^{-4}), \notag \\
 & \notag \\
f & = -\frac{1}{4}r^2 + \const - \frac{1}{3} r^{-2} \left[\frac{ }{ } R - 
n(n-1) \right] \notag \\
 &  \hspace{.5cm}  + \frac{1}{5} r^{-4} \left[ \frac{ }{ } - \Delta R - 
2 |\Ric|^{2}_h + 2(3n-5)R - 4(n-2)(n-1)n \right]
 + O(r^{-6}), \notag
\end{align}
where all geometric quantities on the right-hand side of each equation 
are calculated with respect to $h$. Here $\Delta_{L}$
is the Lichnerowicz Laplacian.

As $f$ can be changed by a constant without affecting (\ref{1.1}),
we will assume for later purposes that the $r^0$-term of $f$ is
$- (n-1)$.  Then the
asymptotic expansion is uniquely determined by $h$.

By construction, the expressions that we obtain for (\ref{2.5}) 
satisfy (\ref{2.2}) and (\ref{2.3}) to all orders.
It remains to show that (\ref{2.4}) is satisfied to all orders.
Using (\ref{2.6}), one can check that the left-hand side of 
equation (\ref{2.4}) is $O \left( r^{-7} \right)$. 

\section{Weighted contracted Bianchi identity}

Consider a general Riemannian manifold $(M, g)$ and a function
$f \in C^\infty(M)$.  We can consider the triple $\left( M,g,
e^{-f} \dvol_g \right)$ to be a smooth metric-measure space.
The analog of the Ricci tensor for such a space is the Bakry-Emery-Ricci
tensor $\Ric + \Hess(f)$. 

One can ask if there is a weighted analog of the contracted Bianchi
identity
$\nabla^a R_{ab} = \frac{1}{2} \nabla_{b} R$, 
in which the Ricci tensor is replaced by the
Bakry-Emery-Ricci tensor.  It turns out that
\begin{equation} \label{3.1}
\nabla^a\left( R_{ab} + \nabla_a\nabla_b f \right) - 
(\nabla^a f)\left(R_{ab} + \nabla_a\nabla_b f \right) =
\frac{1}{2}\nabla_{b}\left( R + 2\Delta f - |\nabla f|^2 \right).
\end{equation}
One recognizes $R + 2\Delta f - |\nabla f|^2$ to be Perelman's
weighted scalar curvature \cite[Section 1.3]{Perelman (2002)}.

A slight variation of (\ref{3.1}) is
\begin{align} \label{3.2}
& \nabla^a\left( R_{ab} + \nabla_a\nabla_b f +\frac{1}{2}g_{ab} \right) - (\nabla^a f)\left(R_{ab} + \nabla_a\nabla_b f + \frac{1}{2}g_{ab}\right) = \\
& \frac{1}{2}\nabla_{b}\left( R + 2\Delta f - |\nabla f|^2 - f \right). \notag 
\end{align}
A corollary is the known fact that if 
$(M,g,f)$ 
is a gradient expanding Ricci soliton then $R + 2\Delta f - |\nabla f|^2 - f$
is a constant. By adding this constant back to $f$, we can assume that
the soliton satisfies
$R + 2\Delta f - |\nabla f|^2 - f = 0$.

\section{Proof of Theorem \ref{1.2}}

If we substitute an asymptotic expansion like (\ref{2.5}) into
(\ref{3.2}) then it will be satisfied to all orders.
Returning to the variables $\{r, x^1, \ldots, x^n \}$,
let us write $X_{ir} = R_{ir} + \nabla_i\nabla_r f$ and
$S = R + 2\Delta f - |\nabla f|^2 - f$. 
If we assume that equations (\ref{2.2}) and (\ref{2.3}) are satisfied 
then (\ref{3.2}) gives
\begin{equation} \label{4.1}
\nabla^i X_{ir} - (\nabla^i f) X_{ir} = \frac{1}{2}\partial_{r} S
\end{equation}
and
\begin{equation} \label{4.2}
\nabla_r X_{ir} - (\partial_r f) X_{ir} = \frac{1}{2}\partial_{i} S,
\end{equation}
where the covariant derivatives on the left-hand side are with
respect to the Levi-Civita connection of $g$.
Rewriting in terms of covariant derivatives with respect to the Levi-Civita
connection of $H(r)$, the equations
become
\begin{equation} \label{4.3}
H^{ij} \left[ \nabla_j X_{ir} - (\partial_j f) X_{ir} \right]
= \frac{1}{2}\partial_{r}S
\end{equation}
and 
\begin{equation} \label{4.4}
\partial_r X_{ir} - \frac12 H^{jk} H_{ki,r} X_{jr} - (\partial_rf)
X_{ir}
= \frac{1}{2}\partial_i S.
\end{equation}

\begin{lemma} \label{4.5}
If $S$ vanishes to all orders in $r^{-1}$ then $X_{ir}$ vanishes to all
orders in $r^{-1}$.
\end{lemma}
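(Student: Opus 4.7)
The plan is to view equation (\ref{4.4}) as a formal linear equation for $X_{ir}$ in descending powers of $r^{-1}$ and to observe that the coefficient of $X_{ir}$ arising from $-(\partial_r f)$ is $\frac{1}{2} r + O(r^{-3})$; this is the unique coefficient in (\ref{4.4}) that grows with $r$, and it will force every formal coefficient of $X_{ir}$ to vanish.

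First I would read off the leading coefficients in (\ref{4.4}) from the expansions (\ref{2.5}) and (\ref{2.6}). Since $f = -\frac{1}{4} r^2 + \const + O(r^{-2})$, one has $-\partial_r f = \frac{1}{2} r + O(r^{-3})$. From $H_{ki} = r^2 h_{ki} + O(1)$, $H^{jk} = r^{-2} h^{jk} + O(r^{-4})$ and $H_{ki,r} = 2 r h_{ki} + O(r^{-3})$, a short computation gives $-\frac{1}{2} H^{jk} H_{ki,r} = -r^{-1} \delta^j_{\;i} + O(r^{-3})$. With $S$ vanishing to all orders, equation (\ref{4.4}) therefore takes the form
\begin{equation*}
\partial_r X_{ir} \: + \: \tfrac{1}{2} r \, X_{ir} \: - \: r^{-1} X_{ir} \: + \: A_i^{\;j}(r) \, X_{jr} \: = \: 0,
\end{equation*}
where $A_i^{\;j}(r) = O(r^{-3})$ is a formal series in $r^{-1}$.

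Then I would argue by contradiction at the formal level. Since $H$ and $f$ admit expansions in $r^{-1}$, and since the remark preceding the lemma establishes $X_{ir} = O(r^{-7})$, the quantity $X_{ir}$ admits a formal expansion $X_{ir} = \sum_{k \ge 7} r^{-k} (X_k)_i$ with each $(X_k)_i$ a covector field on $Y$. Suppose $X_{ir}$ does not vanish to all orders, and let $k_0$ be the smallest index with $(X_{k_0})_i \not\equiv 0$. On the left-hand side of the displayed equation, the unique contribution at order $r^{1 - k_0}$ is $\frac{1}{2} r \cdot r^{-k_0}(X_{k_0})_i$ from the $\frac{1}{2} r X_{ir}$ term; every other contribution has $r$-order at most $r^{-k_0}$, since $\partial_r r^{-k}$, $r^{-1}\cdot r^{-k}$, and $A_i^{\;j}(r) \cdot r^{-k}$ all strictly decrease the power of $r$, while $\frac{1}{2} r \cdot r^{-k}$ with $k > k_0$ produces at most $r^{-k_0}$. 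Matching the $r^{1 - k_0}$ coefficient to zero forces $(X_{k_0})_i \equiv 0$, contradicting the choice of $k_0$. Hence $X_{ir}$ vanishes to all orders.

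There is no real obstacle once the coefficients in (\ref{4.4}) are written out: the decisive observation is that the leading term of $-\partial_r f$ is the only unbounded coefficient in the equation, which in turn is forced by the choice of $f \sim -\frac{1}{4} r^2$ in the ansatz (\ref{2.5}). Note that only equation (\ref{4.4}) is used; equation (\ref{4.3}) is not needed for the argument.
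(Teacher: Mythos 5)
Your proof is correct and follows essentially the same route as the paper: both arguments isolate the leading term of $X_{ir}$ and observe that the $-(\partial_r f)X_{ir}$ term contributes $\tfrac12 r^{-N+1}\phi_i$ to the left-hand side of (\ref{4.4}), which dominates all other contributions and must vanish since the right-hand side vanishes to all orders. Your version just spells out the subleading coefficients a bit more explicitly; no substantive difference.
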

\begin{proof}
Suppose, by way of contradiction, that 
$X_{ir} = r^{-N} \phi + O \left( r^{-N-1} \right)$ for some
$N \ge 1$ and some nonzero $\phi \in \Omega^1(Y)$. 
Using the leading order asymptotics
for $H$ and $f$ from (\ref{2.6}), the left-hand side of
(\ref{4.4}) is $\frac12 r^{-N+1} \phi_i + O \left( r^{-N} \right)$.
As the right-hand side of (\ref{4.4}) vanishes to all orders, we conclude
that $\phi = 0$, which is a contradiction. This proves the lemma.
\end{proof}

We now prove Theorem \ref{1.2}. It suffices to show that 
$X_{ir}$ vanishes to all orders.  Suppose, by way of contradiction,
that $X_{ir} = r^{-N} \phi + O \left( r^{-N-1} \right)$ for some
$N \ge 1$ and some nonzero $\phi \in \Omega^1(Y)$. From Lemma \ref{4.5},
$S$ does not vanish to all orders.  Hence 
$S = r^{-M} \psi +  O \left( r^{-M-1} \right)$ 
for some $M \ge 1$ and some nonzero
$\psi \in C^\infty(Y)$.
Using the leading order asymptotics
for $H$ and $f$ from (\ref{2.6}), the left-hand side of
(\ref{4.4}) is $\frac12 r^{-N+1} \phi_i + O \left( r^{-N} \right)$.
The right-hand side of (\ref{4.4}) is $\frac12 r^{-M} \partial_i \psi
+ O \left( r^{-M-1} \right)$.  Since $\phi$ is nonzero, we can say that
$M \le N-1$.

Next, the left-hand side of (\ref{4.3}) is $r^{-N-2} h^{ij} \nabla_j \phi_i 
+ O \left( r^{-N-3} \right)$, while the right-hand side of (\ref{4.3}) is
$- \frac12 M r^{-M-1} \psi + O \left( r^{-M-2} \right)$. 
Since $\psi$ is nonzero, we can say that $M \ge N+1$. This is a contradiction
and proves the theorem.

\begin{remark} \label{4.6}
Consider the quantities 
$R^g_{rr} + (\Hess_g f)_{rr} + \frac12 - \frac12 
\left( R^g + 2 \triangle_g f - |\nabla f|_g^2 - f
\right)$
and $R^g_{rl} + (\Hess_g f)_{rl}$.
Without assuming that the gradient expanding soliton 
equations are satisfied, one
finds that these quantities only involve first derivatives of $r$.
In this sense, the vanishing of these quantities on a level set of $r$
is like the constraint equations in general relativity.
As a nonasymptotic statement, if (\ref{2.2}) and (\ref{2.3}) hold,
and the aforementioned 
quantities all vanish on one level set of $r$, then 
from (\ref{4.3}) and (\ref{4.4}), they vanish identically.
\end{remark}

\begin{remark}
Asymptotic expansions can also be constructed for asymptotically
conical gradient shrinking solitons.  The leading term in the
function $f$ becomes $\frac14 r^2$.  The (nonasymptotic) uniqueness,
in a neighborhood of the end, was shown in
\cite{Kotschwar-Wang (2015)}.
\end{remark}

\end{document}